\theoremstyle{remark}
\newtheorem{rmk}{Remark}
\newtheorem{prop}{Proposition}[section]
\newtheorem{lem}[prop]{Lemma}
\newtheorem{thm}[prop]{Theorem}
\theoremstyle{definition}
\newtheorem{conj}[prop]{Conjecture}
\newcommand{\dep}{\mathfrak{D}}
\newcommand{\tr}{\operatorname{tr}}
\newcommand{\ind}{\mathds{1}}
\newcommand{\diff}{\mathrm{d}}
\newcommand{\normal}{\mathcal{N}}
\newcommand{\dto}{\rightsquigarrow}
\newcommand{\Xb}{\mathbb{X}}
\newcommand{\Yb}{\mathbb{Y}}
\DeclarePairedDelimiter\floor{\lfloor}{\rfloor}
\definecolor{darkraspberry}{rgb}{0.53, 0.15, 0.34}
\definecolor{britishracinggreen}{rgb}{0.0, 0.26, 0.15}
\definecolor{burntumber}{rgb}{0.54, 0.2, 0.14}
\definecolor{royalblue}{RGB}{0,78,156}
\begin{document}
\title{About limiting spectral distributions of block-rescaled empirical covariance matrices}
\author{Gilles Mordant\thanks{G. Mordant is thankful for the funding by the DFG for the SFB project 1456-A04. Enlightening discussions with J. Segers and T. Claeys during the redaction of this note are gratefully acknowledged.}}
\affil{\footnotesize Institut f\"ur Mathematische Stochastik, Universit\"at G\"ottingen, Goldschmidtstra\ss e 7, 37077 G\"ottingen}

\date{\today}
\maketitle
\begin{abstract}
We establish that the limiting spectral distribution of a block-rescaled empirical covariance matrix is an arcsine law when the ratio between the dimension and the underlying sample size converges to 1 and when the samples corresponding to each block are independent. We further propose a conjecture for the cases where the latter ratio converges to a constant in the unit interval.
\end{abstract}
\section*{Introduction}
Let $d = p+q$ and consider two structured $d \times d$-dimensional covariance matrices of the form
\[
	\Sigma = \begin{bmatrix} \Sigma_1 & \Psi \\ \Psi^\top & \Sigma_2 \end{bmatrix} \quad \text{ and } \quad \Sigma_0 = \begin{bmatrix} \Sigma_1 & 0 \\ 0 & \Sigma_2 \end{bmatrix},
\]
where $\Sigma_1$ is a $p\times p$ matrix. In what follows, a matrix with index zero always means that the off diagonal blocks of the matrix are set to zero.
Assuming $\Sigma_1$ and $\Sigma_2$ are invertible, now construct the standardized correlation matrix
\[
	R = \Sigma_0^{-1/2} \Sigma \Sigma_0^{-1/2}
	= \begin{bmatrix} I_p & \Upsilon \\ \Upsilon^\top & I_q \end{bmatrix}
	\qquad \text{where} \qquad
	\Upsilon = \Sigma_1^{-1/2} \Psi \Sigma_2^{-1/2}.
\]
This matrix is interesting for statistical purposes. Indeed,  applying the adjusted RV coefficient \citep{MS21} to this matrix, one would obtain the coefficient proposed in \citet[Equation~(8.10)]{yao2015sample}. Indeed, 
\[
\frac{1}{p}\tr \big( \Upsilon \Upsilon^\top\big) = \frac{1}{p}\tr \big( \Sigma_1^{-1/2} \Psi \Sigma_2^{-1/2} \Sigma_2^{-1/2} \Psi^\top \Sigma_1^{-1/2}\big)=\frac{1}{p}\tr \big(  \Psi \Sigma_2^{-1} \Psi^\top \Sigma_1^{-1}\big).
\] 
Further, since the diagonal blocks of the standardized matrix are identity matrices, the two dependence coefficients proposed in \citet{MS21} will give the same result, which is
\[
	\dep(R)
	= \frac{d_W^2\left(\Sigma_0^{-1/2} \Sigma \Sigma_0^{-1/2}, I_d\right)}%
	{\sup_{T \in \Gamma(I_p, I_q)} d_W^2\left(T, I_d\right)} = \frac{d_W^2\left(\Sigma_0^{-1/2} \Sigma \Sigma_0^{-1/2}, I_d\right)}%
	{\left(2 - \sqrt{2}\right) \min(p , q)},
\]
where $\Gamma(I_p, I_q)$ is the set of all $d \times d$ positive semi-definite matrices with diagonal blocks $I_p$ and $I_q$ and where $d_W$ is the Bures--Wasserstein distance.

\section{Spectra of block-rescaled empirical covariance matrices}

As is common in statistics, one is interested in studying empirical counterparts to the population quantities under scrutiny. 
Interestingly, $\dep(R)$ is a linear spectral statistic. Before studying the distribution of the empirical version of the latter coefficient, it is an interesting problem to understand the spectrum of the empirical counterpart of $\Sigma_0^{-1/2}\Sigma \Sigma_0^{-1/2}$ when both the dimension $d$ and the underlying sample size $n$ go to infinity. This is does not follow directly from classical results of random matrix theory as it here involves a particular construct with dependent sub-blocks. To the best of our knowledge, this hasn't been studied before. 

Before stating the main result of this paper, we need to set some notation. First, consider an i.i.d. sample of $d$-dimensional random vectors $Z_1, \ldots, Z_n$, where $Z_i \sim \normal(\mu , \Sigma)$ for $i \in \{1, \ldots, n\}$.
Further assume that  
\[
\Sigma = \begin{bmatrix} \Sigma_1 & 0 \\ 0& \Sigma_2\end{bmatrix},
\]
where $\Sigma_1, \Sigma_2$ are positive definite covariance matrices.
Based on this sample, consider 
\[
\Sigma_n := \frac{1}{n}  A, \qquad A:=\sum_{k=1}^n (Z_k- \bar{Z})(Z_k- \bar{Z})^\top,
\]
with $\bar{X}$ the sample mean.
To start with, assume that $d$ is even, set $p=q=d/2$ and extract the two $p\times p$ main diagonal blocks of $\Sigma_n$ to construct $\Sigma_{n,0}$. Let us then study the eigenvalues of the random matrix $\Sigma_n \Sigma_{n,0}^{-1}$. 
The matrix $A$ defined above is a Wishart matrix $W( \Sigma, n-1)$ and thus can be expressed as $\sum_{k=1}^{n-1} \tilde{Z}_k\tilde{Z}_k^\top$, a sum of $n-1$ i.i.d.\ vectors such that 
\[
\tilde{Z}_k=\begin{pmatrix}
X\\ Y
\end{pmatrix}_k \sim \normal(0, \Sigma).
\]
Define $\Yb_{p,n}$ (resp. $\Xb_{p,n}$), the $p\times (n-1)$ matrix obtained by stacking columnwise the $Y_k$ (resp. $X_k$) vectors.
We also have 
\[
\Xb_{p,n}\Xb_{p,n}^\top \overset{d}{=} \Sigma_{n,1},\quad \Yb_{p,n}\Yb_{p,n}^\top \overset{d}{=} \Sigma_{n,2},\quad \Xb_{p,n}\Yb_{p,n}^\top \overset{d}{=} \Psi_{n}.
\]
It is easy to see that studying the spectrum of $ \Sigma_n \Sigma_{n,0}^{-1}$ is indeed equivalent to studying the spectrum of $ \Sigma_{n,0}^{-1/2}\Sigma_n \Sigma_{n,0}^{-1/2}$ by matching the eigenvectors. 
Finally, note that $\Sigma_n \Sigma_{n,0}^{-1}$ is independent of the matrices $\Sigma_1$ and $\Sigma_2$ ans we can thus set the latter to $I_p$ and $I_q$ without loss of generality.

\subsection{Main result}
\label{sec: Main}
Our main result can be stated imprecisely: \textit{ 
In the case $d/n \approx 1$, for $n$ large and $d<n$,  the spectral law of $\Sigma_n \Sigma_{n,0}^{-1}$ is close to the one of an arcsine law with support $[0,2]$, under the assumption of equally sized and independent blocks. }

Recall that this law has the density 
\[
\frac{1}{\pi \sqrt{x(2-x)}} \ind_{\{x \in (0,2)\}}
\]
and its moments are given by 
\[
\int_0^2 \frac{x^k}{\pi \sqrt{x(2-x)}} \diff x= \frac{2^k \Gamma(k+\tfrac{1}{2})}{\sqrt{\pi} \Gamma(k+1) }. 
\]
The result stated above is made precise in the following theorem.
\begin{thm}
\label{thm: Main}
Consider two independent, infinite dimensional matrices $\Xb$ and $\Yb$ where all the entries each matrix are independent standard normal variables defined on the same probability space. Set $p$ a sequence of integers depending on $n$ such that $2p/n \nearrow 1$, as $n\to \infty$.
Obtain the sequences $\Yb_{p,n}$ and $\Xb_{p,n}$ by extracting the upper leftmost $p\times (n-1)$ (sub-)matrices of $\Xb$ and $\Yb$ and construct 
\[
\tilde{\Sigma}_n :=\begin{bmatrix} \Xb_{p,n}\Xb_{p,n}^\top & \Xb_{p,n}\Yb_{p,n}^\top \\ \Yb_{p,n}\Xb_{p,n}^\top & \Yb_{p,n}\Yb_{p,n}^\top  \end{bmatrix}.
\]
As before, construct $\tilde{\Sigma}_{n,0}$ as a copy of $\tilde{\Sigma}_n$ where the off-diagonal blocks are set to zero..
Then, the limiting spectral density of $\tilde{\Sigma}_n \tilde{\Sigma}_{n,0}^{-1}$ as $n \to \infty$ is 
\[
\frac{1}{\pi \sqrt{x(2-x)}} \ind_{\{x \in (0,2)\}} \quad a.s.
\] 
\end{thm}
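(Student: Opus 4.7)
The plan is to reduce the problem to the asymptotics of sample squared canonical correlations, whose limiting spectral law is classical, and then to transfer this law through the algebraic relation between the two matrices. First I would compute $\tilde{\Sigma}_n \tilde{\Sigma}_{n,0}^{-1}$ explicitly. Setting
\[
\Sigma_{n,1}=\Xb_{p,n}\Xb_{p,n}^\top,\quad \Sigma_{n,2}=\Yb_{p,n}\Yb_{p,n}^\top,\quad \Psi_n=\Xb_{p,n}\Yb_{p,n}^\top,
\]
one gets
\[
\tilde{\Sigma}_n \tilde{\Sigma}_{n,0}^{-1} = \begin{bmatrix} I_p & \Psi_n \Sigma_{n,2}^{-1} \\ \Psi_n^\top \Sigma_{n,1}^{-1} & I_p \end{bmatrix},
\]
and a Schur-complement computation shows that its characteristic polynomial equals $\det\bigl((1-\lambda)^2 I_p - \Psi_n^\top \Sigma_{n,1}^{-1}\Psi_n \Sigma_{n,2}^{-1}\bigr)$. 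Thus the $2p$ eigenvalues of $\tilde{\Sigma}_n \tilde{\Sigma}_{n,0}^{-1}$ are precisely $\{1\pm\sqrt{\mu_i}\}_{i=1}^p$, where $\mu_1,\ldots,\mu_p$ are the eigenvalues of the symmetric positive-semidefinite matrix $C_n := \Sigma_{n,1}^{-1/2} \Psi_n \Sigma_{n,2}^{-1} \Psi_n^\top \Sigma_{n,1}^{-1/2}$, i.e.\ the sample squared canonical correlations between the independent Gaussian blocks $\Xb_{p,n}$ and $\Yb_{p,n}$.

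Next, I would invoke the classical result of Wachter: under the independence of $\Xb$ and $\Yb$ and with $p/n, q/n \to 1/2$, the empirical spectral distribution of $C_n$ converges almost surely to the Wachter law with parameters $(1/2,1/2)$, whose density simplifies to the arcsine density $\tfrac{1}{\pi\sqrt{x(1-x)}}\ind_{\{x\in(0,1)\}}$ on $[0,1]$. Denoting by $F_n$ the empirical spectral distribution of $C_n$, the empirical spectral distribution of $\tilde{\Sigma}_n\tilde{\Sigma}_{n,0}^{-1}$ equals $\tfrac{1}{2}(T_+\# F_n + T_-\# F_n)$ with $T_\pm(x)=1\pm\sqrt{x}$ and $\#$ denoting the pushforward. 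Continuity of $T_\pm$ on $[0,1]$ combined with the continuous mapping theorem then yields almost sure weak convergence of the ESD of $\tilde{\Sigma}_n\tilde{\Sigma}_{n,0}^{-1}$ to the average of $T_+\#(\text{arcsine on }[0,1])$ and $T_-\#(\text{arcsine on }[0,1])$. A direct change of variables $\tau=\sqrt{\mu}$ followed by $x=1\pm\tau$ turns this average into the density $\tfrac{1}{\pi\sqrt{x(2-x)}}\ind_{\{x\in(0,2)\}}$, which concludes the argument.

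The main obstacle is that the regime $2p/n\nearrow 1$ sits exactly at the edge of the parameter range most commonly quoted for Wachter's theorem. Although $\Sigma_{n,1}$ and $\Sigma_{n,2}$ remain almost surely invertible (since $p<n-1$), one must check that no spurious atoms appear at $0$ or $1$ in the limit and that Wachter's conclusion extends to the boundary case $c_1+c_2=1$. A robust alternative, should the direct appeal to Wachter require too many caveats, is to prove convergence of the moments $\tfrac{1}{p}\tr(C_n^k)$ combinatorially, using explicit expectations of traces of products of Wishart and inverse-Wishart matrices, and to match them to the moments $\tfrac{\Gamma(k+1/2)}{\sqrt{\pi}\,\Gamma(k+1)}$ of the arcsine distribution on $[0,1]$, yielding the conclusion via Carleman's criterion.
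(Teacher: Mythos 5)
Your Schur-complement reduction is correct and gives a genuinely different, more conceptual route than the paper's.  You show directly that the eigenvalues of $\tilde{\Sigma}_n\tilde{\Sigma}_{n,0}^{-1}$ are $\{1\pm\sqrt{\mu_i}\}_{i=1}^p$ with $\mu_i$ the sample squared canonical correlations, and the pushforward of the arcsine law on $[0,1]$ under $x\mapsto 1\pm\sqrt{x}$ is indeed $\tfrac{1}{\pi\sqrt{x(2-x)}}\ind_{(0,2)}$ (the identity $1-(1-x)^2=x(2-x)$ makes both branches agree). The paper arrives at the same underlying object by a different path: it writes $\tilde{\Sigma}_n\tilde{\Sigma}_{n,0}^{-1}=I_d+C_n$ with $C_n$ the off-diagonal block matrix (whose eigenvalues are exactly the $\pm\sqrt{\mu_i}$), expands $(I_d+C_n)^k$ by the binomial theorem, relates the even traces of $C_n$ to $\Psi_n\Sigma_{n,2}^{-1}\Psi_n^\top\Sigma_{n,1}^{-1}\overset{d}{=}F(F+\alpha_n I)^{-1}$ for a Fisher matrix $F$, and then computes the limiting moments explicitly via the Yao--Zheng--Bai contour integral for the Fisher LSD, letting the Fisher parameter $t\to 1$. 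Your eigenvalue decomposition is cleaner and more explanatory than the paper's trace expansion; what the paper's route buys is a self-contained moment calculation that never needs to invoke a limiting-spectral-distribution theorem at its boundary.

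That boundary is the one genuine gap in your argument, and you correctly flag it yourself. Wachter's theorem (and its modern restatements, e.g.\ Theorem~10.2 of Yao, Zheng and Bai) is stated for $p/n\to y_1$, $q/n\to y_2$ with $y_1+y_2\in(0,1)$, whereas here $y_1=y_2=1/2$ sits exactly at $y_1+y_2=1$. The theorem does not literally apply; one has to argue separately that the conclusion extends to the boundary by a limiting argument. This is not a cosmetic issue: at the boundary $a(y_1,y_2)\to 0$, $b(y_1,y_2)\to 1$ and the density develops non-integrable-looking prefactors that just happen to cancel, and one must rule out mass escaping to the endpoints $0$ and $1$ (i.e.\ sample canonical correlations degenerating to $1$ as $p+q$ approaches $n$). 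Your proposed fallback --- proving convergence of $\tfrac{1}{p}\tr(C_n^k)$ to the arcsine moments and invoking compact support plus the moment method --- is exactly what is required, and it is in substance the route the paper actually takes (via the Fisher contour integrals with $h=1$). So your main argument is a clean reduction plus a citation that, as written, does not quite cover the stated regime; the fallback closes the gap and coincides with the published proof.
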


\subsection{Further limiting distribution conjecture and open problems}
\begin{conj}
In the same setting as Thm~\ref{thm: Main}, when $2p/n \to c \in (0,1)$ as $n\to \infty$, the limiting distribution is a (generalised) Kesten--McKay law.
\end{conj}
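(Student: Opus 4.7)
The plan is to exploit the block structure to reduce the problem to the classical asymptotics of sample canonical correlations, and then push the resulting Wachter limit forward through an explicit change of variables. As in the setup of Theorem~\ref{thm: Main}, the spectrum-preserving conjugation by $\tilde{\Sigma}_{n,0}^{1/2}$ shows that $\tilde{\Sigma}_n\tilde{\Sigma}_{n,0}^{-1}$ is similar to
\[
\begin{bmatrix} I_p & M_n \\ M_n^\top & I_p \end{bmatrix}, \qquad M_n := (\Xb_{p,n}\Xb_{p,n}^\top)^{-1/2}\Xb_{p,n}\Yb_{p,n}^\top(\Yb_{p,n}\Yb_{p,n}^\top)^{-1/2},
\]
whose singular values $r_1,\dots,r_p$ are precisely the sample canonical correlations between $\Xb_{p,n}$ and $\Yb_{p,n}$. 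Writing the SVD $M_n = U D V^\top$ and conjugating by the block-orthogonal matrix $\operatorname{diag}(U,V)$ further block-decomposes the matrix into $p$ independent $2\times 2$ blocks $\big(\begin{smallmatrix}1 & r_i\\ r_i & 1\end{smallmatrix}\big)$, so its $2p$ eigenvalues are exactly $\{1\pm r_i\}_{i=1}^p$.

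Since $\Xb_{p,n}$ and $\Yb_{p,n}$ are independent Gaussian with i.i.d.\ standard normal entries and equal block size $p$ with $p/n\to c/2\in (0,1/2)$, the theorem of Wachter on sample canonical correlations (equivalently, the large-$n$ spectral limit of the Jacobi ensemble to which $M_n M_n^\top$ belongs) yields that $\tfrac1p\sum_i\delta_{r_i^2}$ converges almost surely weakly to the Wachter law on $[0,c(2-c)]$ with density proportional to $\sqrt{x(c(2-c)-x)}/(x(1-x))$; for $c<1$ the support is bounded away from the singularity at $1$. For any bounded continuous $f$ one has
\[
\int f\,\diff \mu_n = \frac{1}{p}\sum_{i=1}^p g(r_i^2), \qquad g(x) := \tfrac{1}{2}\bigl(f(1+\sqrt{x})+f(1-\sqrt{x})\bigr),
\]
where $\mu_n := \tfrac{1}{2p}\sum_i(\delta_{1+r_i}+\delta_{1-r_i})$ is the ESD of interest, and $g$ is continuous and bounded on $[0, c(2-c)]$, so almost-sure weak convergence of $\mu_n$ to the push-forward measure follows. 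A direct change of variables using $(1-s)(1+s) = u(2-u)$ with $s=\sqrt{x}$ and $u=1\pm s$ then yields the explicit limiting density
\[
\mu(u) = \frac{1}{\pi c}\cdot\frac{\sqrt{c(2-c)-(u-1)^2}}{u(2-u)}\, \ind_{\{u\in(1-\sqrt{c(2-c)},\,1+\sqrt{c(2-c)})\}}.
\]
Setting $k := 2/c \in (2, \infty)$, so that $4(k-1)/k^2 = c(2-c)$, the affine rescaling $w = k(u-1)$ transforms $\mu$ exactly into the Kesten--McKay density $\rho_k(w) = \tfrac{k}{2\pi}\cdot\tfrac{\sqrt{4(k-1)-w^2}}{k^2-w^2}$; this is the spectral law of the infinite $k$-regular tree when $k\geq 2$ is an integer, and justifies the ``generalised Kesten--McKay'' label used in the conjecture.

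The genuinely delicate point is the almost-sure (as opposed to in-probability) convergence of the empirical squared-canonical-correlation distribution to Wachter's law in exactly the balanced regime $p=q$ with $p/n\to c/2$: this should follow from Jacobi-ensemble arguments together with standard concentration, but needs to be stated carefully for the precise random matrix $M_nM_n^\top$ used here rather than for the MANOVA or beta-Jacobi matrices that are typical in textbook statements. A welcome consistency check of the above formula is that, as $c\nearrow 1$ (equivalently $k\searrow 2$), one has $c(2-c)\to 1$ and $\mu(u)$ degenerates continuously into $1/(\pi\sqrt{u(2-u)})$ on $[0,2]$, recovering the arcsine limit of Theorem~\ref{thm: Main}; this matches the well-known identification of the $k=2$ Kesten--McKay law with the arcsine measure on $[-2,2]$.
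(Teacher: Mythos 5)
The paper offers no proof of this statement: it is left as a conjecture supported only by simulations, with the remark that the moment method used for Theorem~\ref{thm: Main} becomes intractable when $2p/n\to c<1$. Your proposal is therefore not an alternative to an existing argument but a candidate resolution, and its core is sound. The reduction is exact: conjugating by $\tilde{\Sigma}_{n,0}^{-1/2}$ and then by $\operatorname{diag}(U,V)$ from the SVD of $M_n$ shows that the spectrum of $\tilde{\Sigma}_n\tilde{\Sigma}_{n,0}^{-1}$ is precisely $\{1\pm r_i\}_{i=1}^p$ with $r_i$ the sample canonical correlations (your ``independent $2\times2$ blocks'' should read ``decoupled'', but that is cosmetic). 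I checked the push-forward algebra: with $y_1=y_2=c/2$ the Wachter edge is $c(2-c)<1$, the density $\frac{1}{\pi c}\sqrt{c(2-c)-(u-1)^2}\,/\,(u(2-u))$ is correct, the rescaling $w=k(u-1)$ with $k=2/c$ carries it onto the Kesten--McKay density via $4(k-1)/k^2=c(2-c)$, the first two moments ($1$ and $1+c/2$) agree with $\tfrac1d\tr(\tilde\Sigma_n\tilde\Sigma_{n,0}^{-1})=1$ and $\tfrac1d\tr\big((\tilde\Sigma_n\tilde\Sigma_{n,0}^{-1})^2\big)=1+\tfrac1p\tr(M_nM_n^\top)\to 1+c/2$, and the $c\nearrow1$ degeneration recovers the arcsine law of Theorem~\ref{thm: Main}. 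Note that the limit is Kesten--McKay only up to the affine map $w=2(u-1)/c$, which is the natural reading of ``generalised'' in the conjecture and worth stating explicitly.

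The gap you flag (almost-sure weak convergence of $\tfrac1p\sum_i\delta_{r_i^2}$ to the Wachter law for this exact matrix) is real but can be closed with the paper's own tools rather than a separate Jacobi-ensemble concentration argument: the relation behind \eqref{eq: funcOfFish} is an exact algebraic identity, since with $S_1=\Yb_{p,n}P_X\Yb_{p,n}^\top$ and $S_2=\Yb_{p,n}(I_{n-1}-P_X)\Yb_{p,n}^\top$ the nonzero eigenvalues of $M_nM_n^\top$ are $f_i/(f_i+\alpha_n)$, where $f_i$ are the eigenvalues of the Fisher matrix $F$. The a.s.\ Fisher LSD with $s=1$ and $t\to c/(2-c)\in(0,1)$, together with continuity and boundedness of $x\mapsto x/(x+\alpha)$ and $\alpha_n\to(2-c)/c$, then yields exactly the Wachter limit you invoke; for $c<1$ all parameters stay strictly inside the admissible region, so no edge or atom issues arise. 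Once that step is written out with a precise citation, your argument upgrades the conjecture to a theorem, and it simultaneously provides the less computational, more conceptual proof of Theorem~\ref{thm: Main} that the paper lists as an open problem.
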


This conjecture is supported by simulations. Using the method of moments for the cases considered in the conjecture turns out much more complicated and leads to apparently intractable expressions. Also, it might be interesting to find an alternative proof of Theorem~\ref{thm: Main} that is less computational and more insightful. 
Finally, we only considered the case where the two blocks were stemming from independent samples. What would happen in the case where the later are (linearly) dependent?
 
\subsection{Main result derivation}

First, let us show the following lemma. 
\begin{lem}
\label{lem: Spectrum}
 The spectrum of  $\Sigma \Sigma_0^{-1}$ is contained in [0, 2], irrespective of $p$. 
\end{lem}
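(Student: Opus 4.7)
The plan is to transfer the spectral question to the symmetric matrix $R = \Sigma_0^{-1/2} \Sigma \Sigma_0^{-1/2}$, which has the same eigenvalues as $\Sigma \Sigma_0^{-1}$ (for invertible $\Sigma_0$, the two matrices are similar since $\Sigma \Sigma_0^{-1} = \Sigma_0^{-1/2}(\Sigma_0^{-1/2} \Sigma \Sigma_0^{-1/2})\Sigma_0^{1/2}$). Positive semi-definiteness of $\Sigma$ gives the lower bound immediately: $R \succeq 0$, so all eigenvalues are in $[0,\infty)$. The content of the lemma is therefore the upper bound, which amounts to showing $2I_d - R \succeq 0$.

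The key idea is to exploit a sign-flip symmetry. Let $D = \mathrm{diag}(I_p, -I_q)$, an orthogonal involution. Then conjugation by $D$ flips the sign of the off-diagonal blocks of any block matrix with $p$ and $q$ sized blocks, so
\[
D \Sigma D = \begin{bmatrix} \Sigma_1 & -\Psi \\ -\Psi^\top & \Sigma_2 \end{bmatrix},
\]
and consequently
\[
\tfrac{1}{2}\bigl(\Sigma + D\Sigma D\bigr) = \Sigma_0, \qquad \text{equivalently} \qquad 2\Sigma_0 - \Sigma = D\Sigma D.
\]
Since $\Sigma \succeq 0$ and $D$ is orthogonal, $D \Sigma D \succeq 0$, hence $2\Sigma_0 - \Sigma \succeq 0$.

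Conjugating by $\Sigma_0^{-1/2}$ (which is symmetric and preserves positive semi-definiteness) yields
\[
2 I_d - R = \Sigma_0^{-1/2}\bigl(2\Sigma_0 - \Sigma\bigr)\Sigma_0^{-1/2} = \bigl(\Sigma_0^{-1/2} D\,\Sigma^{1/2}\bigr)\bigl(\Sigma_0^{-1/2} D\,\Sigma^{1/2}\bigr)^\top \succeq 0,
\]
so every eigenvalue of $R$, and therefore of $\Sigma \Sigma_0^{-1}$, is at most $2$. This gives the claim with no dependence on the block size $p$, as required.

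The argument is essentially a one-line reduction once one notices the $\pm$ block symmetry, so I do not expect any genuine obstacle; the only slightly non-obvious point is remembering that $\Sigma \Sigma_0^{-1}$ and the symmetric $R$ share a spectrum, which is the motivation for passing to $R$ in the first place.
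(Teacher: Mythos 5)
Your argument is correct, and it arrives at the same bound by a genuinely different route. The paper also passes to the symmetric matrix $\Sigma_0^{-1/2}\Sigma\Sigma_0^{-1/2}$, but then simply observes that it is positive semi-definite with identity diagonal blocks and invokes the inequality of Thompson (1972) to conclude that the largest eigenvalue is at most $2$. You instead prove the bound from scratch via the sign-flip involution $D=\mathrm{diag}(I_p,-I_q)$: the identity $\Sigma + D\Sigma D = 2\Sigma_0$ immediately gives $2\Sigma_0 - \Sigma = D\Sigma D \succeq 0$, and conjugation by $\Sigma_0^{-1/2}$ then yields $2I_d - R \succeq 0$. This is in effect an elementary, self-contained proof of the relevant special case of Thompson's inequality, so the trade-off is clear: the paper's version is shorter but relies on an external citation, while yours is marginally longer but makes the lemma independent of outside results and makes visible exactly where the block structure (and the block-independence of $p$) enters. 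One small stylistic note: the final square-root factorisation $\bigl(\Sigma_0^{-1/2}D\Sigma^{1/2}\bigr)\bigl(\Sigma_0^{-1/2}D\Sigma^{1/2}\bigr)^\top$ is not needed — once you have $2\Sigma_0-\Sigma\succeq 0$, conjugating a PSD matrix by $\Sigma_0^{-1/2}$ already preserves positive semi-definiteness, so you can drop that step.
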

\begin{proof}
Recall that the spectrum of $\Sigma \Sigma_0^{-1}$ is the same as the one of $ \Sigma_{n,0}^{-1/2}\Sigma_n \Sigma_{n,0}^{-1/2}$, which directly follows by matching the eigenbases appropriately. 
The matrix $ \Sigma_{n,0}^{-1/2}\Sigma_n \Sigma_{n,0}^{-1/2}$ is positive semi-definite and its diagonal blocks are $I_p$. The inequality in \citet{thompson1972inequalities} ensures that the largest eigenvalue cannot be larger than 2.
\end{proof}
 From Lemma~\ref{lem: Spectrum}, we have compactness of the support and we know that the limiting spectral distribution will be uniquely characterised by its moments, if it exists.
Our proof technique will thus be to compute the limits of the moments and show that they are equal to those conjectured.

We will make use of the following lemma that can be found in  \citet{bordenave2016spectrum}, for instance.
\begin{lem}
Let $\mu$ be uniquely characterised by its moments and $H_n$ a sequence of Hermitian matrices of size $n \times n$. Finally let $m_\ell$ be $\mu$'s $\ell$-th moment. If for all $\ell \in \mathbb{N}$, 
\[
\lim_{n \to \infty} \frac{1}{n} \tr H_n^\ell = m_\ell.
\]
Then,  
\[
\mu_{H_n}:= \frac{1}{n}\sum_{i=1}^n \delta_{\lambda_i(H_n)} \dto 
\mu,
\]
where $\dto$ denotes the weak convergence of measures. 
\end{lem}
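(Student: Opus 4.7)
The plan is to run the classical method of moments. Diagonalising $H_n$ by the spectral theorem gives $\frac{1}{n}\tr H_n^\ell = \int x^\ell \, d\mu_{H_n}(x)$, so the hypothesis reads as the convergence of every polynomial moment of $\mu_{H_n}$ to the corresponding moment of $\mu$.

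First I would establish tightness of $(\mu_{H_n})_{n \ge 1}$. Since $\int x^2 \, d\mu_{H_n}(x) = \frac{1}{n}\tr H_n^2$ converges to $m_2 < \infty$, Markov's inequality gives $\sup_n \mu_{H_n}(\{|x|>K\}) = O(K^{-2})$, which is tightness. By Prokhorov's theorem, every subsequence then admits a further subsequence $(\mu_{H_{n_k}})_k$ converging weakly to some probability measure $\nu$, and the task reduces to showing $\nu = \mu$.

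Identifying $\nu$ requires checking that $\int x^\ell \, d\nu(x) = m_\ell$ for every $\ell$. This is the only step that needs genuine care, because weak convergence handles only bounded continuous test functions while $x^\ell$ is unbounded. I would handle this by a uniform integrability argument: the bound $\int_{|x|>R} |x|^\ell \, d\mu_{H_{n_k}}(x) \le R^{-\ell}\int x^{2\ell} \, d\mu_{H_{n_k}}(x)$ combined with the convergence of the right-hand integral to $m_{2\ell}$ (hence its uniform boundedness in $k$) allows one to truncate $x^\ell$ to a bounded continuous function, pass to the weak limit, and then undo the truncation by sending $R \to \infty$. This shows that $\nu$ and $\mu$ share all moments, whence the unique-determination hypothesis gives $\nu = \mu$. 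Since every subsequential limit equals $\mu$, the standard subsequence principle yields the claimed weak convergence $\mu_{H_n} \dto \mu$ of the full sequence.

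The only conceptual obstacle is thus the moment identification step; tightness and the subsequence extraction are routine. In the setting where the lemma is actually invoked in this paper, the obstacle even evaporates: by Lemma~\ref{lem: Spectrum} every spectrum lies in the compact interval $[0,2]$, so each $\mu_{H_n}$ is supported in $[0,2]$ and one may bypass the uniform integrability discussion altogether and upgrade moment convergence to weak convergence via the Weierstrass approximation theorem on $C([0,2])$.
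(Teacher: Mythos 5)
The paper does not actually prove this lemma---it is quoted verbatim with a pointer to \citet{bordenave2016spectrum}---so there is no internal argument to compare against. Your sketch is a correct, self-contained reconstruction of the standard method-of-moments argument: converting $\tfrac{1}{n}\tr H_n^\ell$ into a moment of $\mu_{H_n}$ via the spectral theorem, deducing tightness from the boundedness of the second moments and Markov's inequality, extracting subsequential weak limits by Prokhorov, and then identifying each limit $\nu$ with $\mu$ by matching moments. The one step that genuinely needs care---passing moment convergence through the weak limit, since $x^\ell$ is unbounded---you handle correctly with the uniform-integrability bound $\int_{|x|>R}|x|^\ell\,\diff\mu_{H_{n_k}}\le R^{-\ell}\int x^{2\ell}\,\diff\mu_{H_{n_k}}$, whose right-hand side is bounded uniformly in $k$ because $m_{2\ell}<\infty$; combined with the Carleman-type uniqueness hypothesis and the subsequence principle this closes the argument. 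One small point you leave implicit is that the limit $\nu$ itself has finite moments of all orders (so the truncation can be removed on the $\nu$ side as well); this follows from Fatou applied along the converging subsequence, e.g.\ $\int x^{2\ell}\,\diff\nu\le\liminf_k\int x^{2\ell}\,\diff\mu_{H_{n_k}}=m_{2\ell}$. Your closing observation is also apt: in the application at hand, Lemma~\ref{lem: Spectrum} forces every empirical spectral measure to live in $[0,2]$, so one can replace the whole uniform-integrability discussion by Weierstrass approximation on $C([0,2])$, which is exactly the simplification the author implicitly relies on when remarking that compact support plus moment convergence suffices.
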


We can now turn to the proof of the main result.

\begin{proof}[Proof of Theorem~\ref{thm: Main}]
Note that 
\begin{equation}
\label{eq: Decomp}
\Sigma \Sigma_0^{-1} = 
\begin{bmatrix} I_p & \Psi \Sigma_2^{-1} \\ \Psi^\top\Sigma_1^{-1} & I_q \end{bmatrix}=: I_d + C =: I_d +  \begin{bmatrix} 0 &B_1 \\  B_2  & 0 \end{bmatrix}.
\end{equation}
We adopt the trace formulation and are thus interested in the quantities 
\[
\frac{1}{d} \tr \left( \big( \tilde{\Sigma}_n \tilde{\Sigma}_{n,0}^{-1}\big)^k\right),\qquad k \in \mathbb{N}.
\]
Because of \eqref{eq: Decomp}, we have
 \[
\frac{1}{d} \tr \left( \big( \tilde{\Sigma}_n \tilde{\Sigma}_{n,0}^{-1}\big)^k\right) =\frac{1}{d} \tr \left( \big( I_d + C_n)^k\right),\qquad k \in \mathbb{N},
\]
where $C_n$ is the empirical counterpart to $C$. As the indicator matrix commutes with any matrix, the binomial formula can be used to deduce
\begin{equation}
\label{eq: LinkMomDepRV}
\frac{1}{d} \tr \left( \big( \tilde{\Sigma}_n \tilde{\Sigma}_{n,0}^{-1}\big)^k\right) = \frac{1}{d}\sum_{\ell=0}^k {k \choose \ell} \tr (C_n^\ell).
\end{equation}
Let us now have a look at traces of the powers of $C_n$. We have 
\[
\tr C^2= \tr \begin{bmatrix} B_{n,1}B_{n,2}  & 0 \\  0&B_{n,2} B_{n,1} 
\end{bmatrix}, 
\qquad \tr C^3= 0,
\qquad 
\tr C^4 = \tr \begin{bmatrix}     B_{n,1}B_{n,2} B_{n,1}B_{n,2}& 0\\  0&B_{n,2} B_{n,1}B_{n,2}B_{n,1}
\end{bmatrix}.
\]
From there, because of the properties of the matrices involved, we have 
\begin{equation}
\label{eq: MomRel}
\tr C_n^\ell = \begin{cases}
\ 2 \tr \left( \big( B_{n,1}B_{n,2} \big)^{\ell/2} \right), \quad &\ell \text{ even},\\
\quad 0, & k \text{ odd}.
\end{cases}
\end{equation}

Further, observe that \eqref{eq: MomRel} relates the moments of the eigenvalue distribution of $C$ with those of 
\begin{equation} 
\label{eq: M}
\Xb_{p,n}\Yb_{p,n}^\top \big(  \Yb_{p,n}\Yb_{p,n}^\top \big)^{-1}\Yb_{p,n}\Xb_{p,n}^\top \big(  \Yb_{p,n}\Yb_{p,n}^\top \big)^{-1}
\overset{d}{=} \Psi_n \Sigma_{n,2}^{-1} \Psi_n^\top \Sigma_{n,1}^{-1}, 
\end{equation} 
where the last equality in distribution is written to help the reader identify the elements.
\begin{rmk}
We have found a link between the dependence coefficients coming from the Bures-distance after ``block-rescaling'' standardisation and the modified RV coefficient again after ``block-rescaling'' standardisation.
\end{rmk}
To obtain the limiting distribution, we thus need to understand 
\begin{equation}
\label{eq: linkBPsi}
\tr \left( \big( B_{n,1}B_{n,2} \big)^{\ell/2} \right) \overset{d}{=}  \tr \left( \big( \Psi_n \Sigma_{n,2}^{-1} \Psi_n^\top \Sigma_{n,1}^{-1})^{\ell/2} \right), \quad \ell \text{ even}.
\end{equation}
Further, define the projector
\[
P_X:= \Xb_{p,n}^\top (\Xb_{p,n}\Xb_{p,n}^\top)^{-1}\Xb_{p,n}.
\]
An important fact in this regard, proved in \citep[Section 8.3.1]{yao2015sample}, is that \eqref{eq: M} can be rewritten as a function of a so-called Fisher matrix $F$. Indeed,
\begin{equation}
\label{eq: funcOfFish}
\Psi_n \Sigma_{n,2}^{-1} \Psi_n^\top \Sigma_{n,1}^{-1} \overset{d}{=} F ( F + \alpha_n I)^{-1}, 
\end{equation}
where 
\[
F :=  \frac{1}{p} \Yb_{p,n} P_X \Yb_{p,n}^\top \left( \frac{1}{n-1-p} \Yb_{p,n} (I_{n-1}-P_X)\Yb_{p,n}^\top \right)^{-1}
\]
and $\alpha_n := (n-1-p)/p$. 
The terminology Fisher matrix comes from an analogy with the F-test as, under our independence assumption, $\Yb_{p,n} P_X \Yb_{p,n}^\top$ and $\Yb_{p,n} (I_{n-1}-P_X)\Yb_{p,n}^\top$ are two independent Wishart matrices, see again \citep[Section 8.3.1]{yao2015sample}.

The spectral distribution of Fisher matrices has been well studied in the literature so that, combining \eqref{eq: linkBPsi} with \eqref{eq: funcOfFish}, it appears that establishing the limiting moments of the eigenvalue distribution of $\tilde{\Sigma}_n \tilde{\Sigma}_{n,0}^{-1}$ involves computing the integral
\[
\int_0^\infty \frac{x^k}{(x+s/t)^k } \diff F_{s,t}(x), 
\]
where $F_{s,t}$ is the Fisher Limiting Spectral Distribution. The equality $\alpha_n= s/t$ will be used later on. This distribution's density is given by 
\[
p_{s,t}(x):=\frac{1-t}{2\pi x (s+tx)} \ \sqrt{(b-x)(x-a)}, \quad a\le x\le b,
\]
with 
\[
a=a(s,t) = \frac{(1-h)^2}{(1-t)^2}, \quad b=b(s,t) = \frac{(1+h)^2}{(1-t)^2}, \quad h=h(s,t) = \sqrt{s+t -st}.
\]
By \citep[Theorem 2.23]{yao2015sample}, we have

\begin{align*}
I_k&:=\int_0^\infty \frac{x^k}{(x+s/t)^k } \diff F_{s,t}(x) \\
	&= - \frac{h^2 t ^{k}(1-t)}{4\pi i } 
	\oint_{\lvert z \rvert=1} \frac{(1+hz)^{k-1}(h+z)^{k-1} (1-z^2)^2}
	{z (h+tz)^{k+1} (t+hz)^{k+1}} \diff z \\
	&  = - \frac{ (1-t)}{4\pi  i  t  h^{k-1} } 
	\oint_{\lvert z \rvert=1} \frac{(1+hz)^{k-1}(h+z)^{k-1} (1-z^2)^2}
	{z (z+h/t)^{k+1} (z+t/h)^{k+1}} \diff z \\
	&= - \frac{ (1-t)}{2  t  h^{k-1} } 
	\left\{ \frac{(1+hz)^{k-1}(h+z)^{k-1} (1-z^2)^2}
	{ (z+h/t)^{k+1} (z+t/h)^{k+1}} \Bigg\vert_{z=0} + \frac{1}{k!} \frac{\partial^k}{\partial z^k}  \frac{(1+hz)^{k-1}(h+z)^{k-1} (1-z^2)^2}
	{ (z+h/t)^{k+1} z} \Bigg\vert_{z=-t/h}	\right\}\\
	&= - \frac{ (1-t)}{2  t  h^{k-1} } 
	\left\{ h^{k-1}  + \frac{1}{k!} \frac{\partial^k}{\partial z^k}  \frac{(1+hz)^{k-1}(h+z)^{k-1} (1-z^2)^2}
	{ (z+h/t)^{k+1} z} \Bigg\vert_{z=-t/h}	\right\} \\
\end{align*}
where Cauchy's theorem was used in the third equality. To avoid notations that are already cumbersome, we do not keep track in the notation that $I_k$ is a function of $s,t$.

It is possible to derive an explicit formula for the above quantity. Unfortunately, the latter appears rather intractable.
Still, one can compute that we have
\begin{equation}
\label{eq: firstMom}
I_1= \frac{t}{s+t}, \qquad I_2= \frac{t^2(s^2+s+t)}{(s+t)^3}.
\end{equation}

Note that the value of $I_1$ was already presented in \citet[Lemma 2.26]{yao2015sample}.
Still, in the setting that we are interested in, that is setting $s=q/p =1$, it turns out that $h=1$, which brings nice simplifications. 
We then have 
 \begin{align*}
 I_k &= - \frac{ (1-t)}{2  t   } 
	\left\{ 1  +  \frac{1}{k!} \frac{\partial^k}{\partial z^k}  \frac{(1+z)^{2k}(1-z)^2}
	{ (z+h/t)^{k+1} z} \Bigg\vert_{z=-t}	\right\}.
 \end{align*}
 Let us now develop the derivative and evaluate it for $z=-t$. We will use the shorthand notation ``$\partial^\ell$'' to mean   ``$\partial^\ell/\partial z^\ell$''. We will also use Pochammer's symbol for the rising factorial, i.e., 
 \[
 (x)_n := \frac{\Gamma(x+n)}{\Gamma(x)}.
\]  It is not difficult to check that 
 \begin{align*}
 &\partial^\ell  (1+z)^{2k}&&   = (1+z)^{2k-\ell} (2k+1-\ell)_\ell &&\\
  &\partial^\ell  (z-1)^{2}&&= (z-1)^{2-\ell} (3-\ell)_\ell&&\\
  & \partial^\ell \ z^{-1} && = (-1)^\ell z^{-1-\ell} \ell! &&\\
  & \partial^\ell  (z+1/t)^{-k-1} && = (z+1/t)^{-k-1-\ell} (-k-\ell)_\ell. &&\\
\end{align*}
We can then rewrite the derivative of the product using multinomial coefficients\footnote{The multinomial coefficient is defined by the equality ${k \choose j_1,j_2,j_3,j_4} = \frac{k!} { \prod_{i=1}^4 j_i! }$.}. Relying on the partial derivatives that were just computed, it holds that
\begin{align*}
& \frac{1}{k!}\frac{\partial^k}{\partial z^k}  \frac{(1+z)^{2k}(1-z)^2}{ (z+h/t)^{k+1} z}  \\ 
& \qquad= \frac{1}{k!}\sum_{j_1+j_2+j_3+j_4=k}{k \choose j_1,j_2,j_3,j_4}  \partial^{j_1}(1+z)^{2k} \partial^{j_2}(1-z)^2 \partial^{j_3} z^{-1} \partial^{j_4} (z+1/t)^{-k-1-\ell}  \\
& \qquad= \frac{1}{k!}\sum_{j_1+j_2+j_3+j_4=k}{k \choose j_1,j_2,j_3,j_4}  (1+z)^{2k-j_1} (2k+1-j_1)_{j_1} (z-1)^{2-j_2} (3-j_2)_{j_2} \\
& \qquad \qquad \qquad\qquad \times  \ (-1)^{j_3} z^{-1-j_3} j_3! (z+1/t)^{-k-1-j_4} (-k-j_4)_{j_4}. \\
\intertext{Using the fact that $(-x)_n= (-1)^n (x-n+1)_n$, }
& \frac{1}{k!}\frac{\partial^k}{\partial z^k}  \frac{(1+z)^{2k}(1-z)^2}{ (z+h/t)^{k+1} z}  \\ 
& \qquad=\sum_{j_1+j_2+j_3+j_4=k}\frac{ (-1)^{j_3+j_4}} { j_1 ! j_2 ! j_4 !}  (1+z)^{2k-j_1} (2k+1-j_1)_{j_1} (z-1)^{2-j_2} (3-j_2)_{j_2} \\
& \qquad \qquad\qquad \qquad \times \ z^{-1-j_3}  (z+1/t)^{-k-1-j_4} (k+1)_{j_4} \\ 
&\qquad=\sum_{j_1+j_2+j_3+j_4=k}  (-1)^{j_3+j_4} { 2k \choose j_1}{ 2 \choose j_2} { k+j_4 \choose k,j_4}(1+z)^{2k-j_1}  (z-1)^{2-j_2}  z^{-1-j_3}  (z+1/t)^{-k-1-j_4},  
\end{align*}
noticing that $(x+1-n)_n / n! = {x \choose n}$.
Setting $z=-t$ in the last sum of the previous display, we get 
\begin{align*}
& \sum_{j_1+j_2+j_3+j_4=k}  (-1)^{j_3+j_4} { 2k \choose j_1}{ 2 \choose j_2} { k+j_4 \choose k,j_4}(1-t)^{2k-j_1}  (-t-1)^{2-j_2}  (-t)^{-1-j_3}  (-t+1/t)^{-k-1-j_4}  \\
&= \sum_{j_1+j_2+j_3+j_4=k}  (-1)^{j_4  -j_2 +1} { 2k \choose j_1}{ 2 \choose j_2} { k+j_4 \choose k,j_4}(1-t)^{2k-j_1}  (t+1)^{2-j_2}  t^{-1-j_3}  (-t+1/t)^{-k-1-j_4} \\
&= \sum_{j_1+j_2+j_3+j_4=k}  (-1)^{j_4  -j_2 +1} { 2k \choose j_1}{ 2 \choose j_2} { k+j_4 \choose k,j_4}(1-t)^{2k-j_1}  (t+1)^{2-j_2}  t^{-j_3 + k  +j_4}  (1-t^2)^{-k-1-j_4} \\
&= \sum_{j_1+j_2+j_3+j_4=k}  (-1)^{j_4  -j_2 +1} { 2k \choose j_1}{ 2 \choose j_2} { k+j_4 \choose k,j_4}(1-t)^{k-j_1-1-j_4 }  (t+1)^{1-j_2 -k-j_4}  t^{-j_3 + k  +j_4}. 
\end{align*}
We thus have 
\begin{align*}
I_k= - \frac{(1-t)}{2t} - \frac{1}{2t}\sum_{j_1+j_2+j_3+j_4=k}  (-1)^{j_4  -j_2 +1} { 2k \choose j_1}{ 2 \choose j_2} { k+j_4 \choose k,j_4}(1-t)^{k-j_1-j_4 }  (t+1)^{1-j_2 -k-j_4}  t^{-j_3 + k  +j_4}.
\end{align*}

Recall that $s=q/p$ and set $t=q/(n-1-p)$ where the dependence of $p,q$ on $n$ is as before. Remark that $\alpha_n=s/t$, as required in the Fisher density because of \eqref{eq: funcOfFish}.
Now taking the limit as $n \to \infty$, under the same assumptions as above regarding $p,q,n$, we have that $t=q/(n-1-p)= p/(p-1) \to 1.$
Therefore, 
\begin{align*}
\lim_{n\to \infty}I_k 
&=  - \frac{1}{2}\sum_{\substack{j_1+j_2+j_3+j_4=k \\ j_1+ j_4 = k}}  (-1)^{j_4  -j_2 +1} { 2k \choose j_1}{ 2 \choose j_2} { k+j_4 \choose k,j_4}  2^{1-j_2 -k-j_4}  \\
&=  - \frac{1}{2}\sum_{\substack{ j_1+ j_4 = k}}  (-1)^{j_4  +1} { 2k \choose j_1}{ k+j_4 \choose k,j_4}  2^{1-k-j_4}\\
&=  - \frac{1}{2}\sum_{ j_1= 0}^k  (-1)^{k-j_1  +1} { 2k \choose j_1}{ 2k-j_1 \choose k,k-j_1}  2^{1-k-(k-j_1)}  \\
&= \sum_{ j_1= 0}^k  (-1)^{k-j_1 } { 2k \choose j_1}{ 2k-j_1 \choose k,k-j_1}  \left(\frac{1}{2}\right)^{2k - j_1}  \\
&=  \left(\frac{1}{2}\right)^{2k } \frac{(2k)!}{(k!)^2} \sum_{ j_1= 0}^k  { k \choose j_1} (-1)^{k-j_1 } 2^{ j_1}  \\
&=  \left(\frac{1}{2}\right)^{2k } \frac{(2k)!}{(k!)^2}.  \\
\end{align*}
We finally arrive at the fact that 
\begin{align*}
\label{eq: LinkMomDepRV}
\lim_{n\to \infty }\frac{1}{2p} \tr \left( \big( \Sigma_n \Sigma_{n,0}^{-1}\big)^k\right) 
&= \sum_{\ell=0}^k {k \choose \ell} \left(\frac{1}{2}\right)^{\ell} \frac{(\ell)!}{((\ell/2)!)^2} \ind\{\ell \text{ even}\} \\
&= \sum_{\eta=0}^{\floor*{k/2}} {k \choose 2\eta} {2\eta \choose \eta} \frac{1}{2^{2\eta}} \quad a.s.
\end{align*}
Finally, \citetitle{Mathematica} gives the following equality
\[ 
\sum_{\eta=0}^{\floor*{k/2}} {k \choose 2\eta} {2\eta \choose \eta} \frac{1}{4^{\eta}} =\frac{2^k (k-1/2)!}{\sqrt{\pi}  k! },
\]
which is precisely what we wanted; recall the moments of the arcsine distribution given in Section~\ref{sec: Main}. The proof of Theorem~\ref{thm: Main} is thus complete. \end{proof}

\printbibliography
\end{document}